\journal{Journal of \LaTeX\ Templates}
\begin{document}

\newtheorem{lemma}{\textbf{Lemma}}
\newtheorem{definition}{\textbf{Definition}}
\newtheorem{theorem}{\textbf{Theorem}}
\newtheorem{example}{\textbf{Example}}
\def\proof{\noindent{\bf Proof}: }

\begin{frontmatter}

\title{Stability Analysis of Time-varying Delay Neural Network for Convex Quadratic Programming With Equality Constraints and Inequality Constraints\tnoteref{mytitlenote}}
\tnotetext[mytitlenote]{This project was supported by Natural Science Foundation of Shangdong Province, No.ZR2019PA007}

\author{Ling Zhang}


\author{Xiaoqi Sun\corref{mycorrespondingauthor}}
\cortext[mycorrespondingauthor]{Corresponding author}
\ead{sunxiaoqi@live.com}

\address{Mathematics and Statistics Department, Qingdao University, Qingdao, China}

\begin{abstract}
In this paper, a kind of neural network with time-varying delays is proposed to solve the problems of quadratic programming. The delay term of the neural network changes with time $t$. The number of neurons in the neural network is $n+h$, so the structure is more concise. The equilibrium point of the neural network is consistent with the optimal solution of the original optimization problem. The existence and uniqueness of the equilibrium point of the neural network are proved. Application inequality technique proved global exponential stability of the network. Some numerical examples are given to show that the proposed neural network model has good performance for solving optimization problems.
\end{abstract}

\begin{keyword}
\ Neural network; Global exponential stability; Convex quadratic programming; Time-varying delay
\end{keyword}

\end{frontmatter}


\section{Introdution}

\par \setlength{\parindent}{2em} The model of the convex quadratic programming (CQP) problem is simple in form, convenient to construct, and easy to solve, it is now the basic method of learning risk assessment management\cite{article1,article2,article3}, system analysis\cite{article4,article5}, combinatorial optimization science\cite{article6}, economic dispatch\cite{article7}, and other disciplines. The quadratic programming problem is widely used in the fields of robust control\cite{article8}, parameter estimation\cite{article9,article10}, regression analysis\cite{article11}, image and signal processing\cite{article12}, etc. The general form of the CQP model is given below:
\begin{equation}\label{1}
\begin{array}{l}
\min \frac{1}2 x^{T}Qx + c^{T}x := f(x)\\
\left\{
        \begin{array}{l}
\textit{s.t.} Ax=b\\
             Bx\leq d
        \end{array}
\right.
\end{array}
\end{equation}
Where $Q\in R^{n\times n}$ is semi-definite matrix, $x=(x_{1},x_{2},...,x_{n})\in R^{n}$, $c\in R^{n}$, $A\in R^{n\times n}$ is a row full rank matrix, that is $Rank(A)=m$, $b\in R^{m}$, $B\in R^{h\times n}$, $d\in R^{h}$.
\par \setlength{\parindent}{2em} Quadratic programming problem is widely used in practical problems. Due to a large number of dimensions and complex structure in practical problems, calculations with traditional numerical methods will take too long. Solving the quadratic programming problems through artificial neural networks can shorten the calculation time. In 1986, D. Tank and J.J. Hopfield\cite{article13} first proposed to solve the optimization problem by constructing a neural network. After that, the application of neural networks based on circuit implementation to solve quadratic programming problems has become a research topic. Kennedy and Chua\cite{article14} proposed a penalty function to construct neural networks. Using this neural network, the results of Tank and Hopfield are extended to general nonlinear programming problems. Based on the Lagrange multiplier method instead of using penalty functions, Zhang and Constantinides\cite{article15} make the network contain two types of neurons, reducing the restrictions on the form of the cost function. Huang improved on Zhang and Constantinides and designed a new Lagrangian-type neural network, which can directly deal with inequality constraints without adding relaxation variables\cite{article16}. Based on the gradient method, Chen et al. established a type of neural network that did not involve penalty parameters and could solve both the original problem and the dual problem at the same time\cite{article17}.Based on the projection theorem and KKT condition, Xia et al. established a recurrent neural network to solve the related linear piecewise equation, which reduced the complexity of the model\cite{article18}.A type of recursive neural network was proposed by Nazemi et al \cite{article21} to solve quadratic programming problems. The constructed neural network does not need the multiplier related to inequality in quadratic programming conditions.
\par \setlength{\parindent}{2em} In reality, due to the influence of hardware performance and the limitation of signal transmission, it is inevitable to produce time delay in the limited transmission time. The impact of time delay on the operating system is often not negligible. Some papers have proposed neural networks with a time delay to solve quadratic optimization problems. For example, Liu et al. put forward a kind of time-delay neural network to solve the linear projection equation and proved the stability of the time-delay neural network by linear matrix inequality method(LMI) methods. Yang and Cao \cite{article20} proposed a time-delay projection neural network without penalty function and Lagrange multipliers. Its structure is simple and the network state variables are reduced, but this will affect its practicability. Sha et al. \cite{article22} proposed a type of delayed neural network added time delay, with fewer neurons to improve computational efficiency, reduce the number of network structure layers. Wen et al. \cite{article25} generalized the neural network that solves the convex optimization problem, and gave a time-delay neural network to solve the general optimization problem with weak convexity.
\par \setlength{\parindent}{2em}Indeed, the delayed neural networks not contained in a constant value, it will produce a change over time. Therefore, it is meaningful to study neural networks with variable time delays for solving quadratic programming problems. In this paper, a neural network model with variable time delay is established based on the saddle point theorem and the projection theorem. The existence and uniqueness of the network are analyzed, and the global exponential stability of the network is proved by using the inequality technique. Some numerical examples are listed and verified by Matlab2016a. The results show that the neural network has good performance.
\par \setlength{\parindent}{2em}The order of this article is as follows: in the second part, we derive the neural network model with $n+h$ neurons utilizing saddle point theorem, projection theorem, and some inequalities; in the third part, using techniques such as scaling of inequalities, combined with the lemma, we discussed the existence and uniqueness of the proposed neural network with variable delays; in the fourth part, we discuss that the proposed neural network is globally exponentially stable when the condition $(|\kappa-1|+1)\|I-\alpha W\|-\kappa<0$ is satisfied. In the fifth part, the examples of 3 -dimensional and 4 - dimensional convex quadratic programming are given to verify the better performance of the neural networks.

\section{Establishment of neural network model }

\par \setlength{\parindent}{2em}Let $\Omega=\{x\in R^{n}|Ax=b,Bx\leq d\}$ be a non-empty feasible region of $(1)$, and the optimal solution of $(1)$ is in $\Omega$.
The Lagrange function of $(1)$ can be written as
\begin{equation}\label{2}
G(x,u,v)=f(x)-u^{\mathrm{T}}(Ax-b)-v^{\mathrm{T}}(d-Bx)
\end{equation}
where $u\in R^{m},v\in R^{h}$ are Lagrange multipliers.\\
Using the saddle point theorem, if $x^{*}$ is used to represent the optimal solution of $(1)$, then there exists $u^{*},v^{*}$ such that the following inequality holds
\begin{equation}\label{3}
G(x^{*},u,v^{*})\leq G(x^{*},u^{*},v^{*})\leq G(x,u^{*},v^{*})
\end{equation}
We put $(2)$ into $(3)$
\begin{equation}\label{4}
\begin{array}{l}
f(x^{*})-u^{\mathrm{T}}(Ax^{*}-b)-(v^{*})^{\mathrm{T}}(d-Bx^{*})\\ \begin{array}{l}
 \leq f(x^{*})-(u^{*})^{\mathrm{T}}(Ax^{*}-b)-(v^{*})^{\mathrm{T}}(d-Bx^{*})\\ \leq f(x)-(u^{*})^{\mathrm{T}}(Ax-b)-(v^{*})^{\mathrm{T}}(d-Bx)
\end{array}
\end{array}
\end{equation}
It can be obtained from the left side of $(4)$ $(-u^{T}+(u^{*})^{\mathrm{T}})(Ax^{*}-b)\leq 0$
that is
\begin{equation}\label{5}
(u-(u^{*}))^{\mathrm{T}}(Ax^{*}-b)\geq 0,\forall u\in R^{m}
\end{equation}
so as to get $Ax^{*}=b$.\\
It can be obtained from the right side of $(4)$ $f(x^{*})-(u^{*})^{\mathrm{T}}Ax^{*}-f(x)+(u^{*})^{\mathrm{T}}Ax\leq v^{\mathrm{T}}Bx-(v^{*})^{\mathrm{T}}Bx^{*}$
that is\begin{equation}\label{6}
f(x^{*})-(u^{*})^{\mathrm{T}}Ax^{*}+c^{*}-(f(x)-(u^{*})^{\mathrm{T}}Ax+(v^{*})^{\mathrm{T}}Bx)\leq 0
\end{equation}
It was found that $x^{*}=\min\{f(x)-(u^{*})^{\mathrm{T}}Ax+(v^{*})^{\mathrm{T}}Bx\}$. Thus, $x^{*}, u^{*}, v^{*}$ satisfy
$\nabla f(x^{*})-A^{T}u^{*}+B^{T}v^{*}=0$, $\nabla f(x^{*})$ denotes the gradient of the differential function $f(x)$, that is
\begin{equation}\label{7}
x^{*}=Q^{-1}(A^{T}u^{*}-B^{T}v^{*}-c).
\end{equation}\\
From $(7)$, and we already know $Ax^{*}=b$, it can be deduced that $Ax^{*}-A(Qx^{*}+c)+AA^{\mathrm{T}}u^{*}-AB^{\mathrm{T}}v^{*}=b$, that is $AA^{\mathrm{T}}u^{*}=b-Ax^{*}+A(Qx^{*}+c)+AB^{\mathrm{T}}v^{*}=A(Qx^{*}+c+B^{\mathrm{T}}v^{*})-(Ax^{*}-b)$,
thus
\begin{equation}\label{8}
Au^{*}=A^{\mathrm{T}}(AA^{\mathrm{T}})^{-1}A(Qx^{*}+c+B^{\mathrm{T}}v^{*})-A^{\mathrm{T}}(AA^{\mathrm{T}})^{-1}(Ax^{*}-b)
\end{equation}
and we have
\begin{equation}\label{9}
A^{\mathrm{T}}u^{*}=Qx^{*}+c+B^{\mathrm{T}}v^{*}
\end{equation}
so that $(\mathrm{I}-A^{\mathrm{T}}(AA^{\mathrm{T}})^{-1}A)(Qx^{*}+c+B^{\mathrm{T}}v^{*})+A^{\mathrm{T}}(AA^{\mathrm{T}})^{-1}(Ax^{*}-b)=0$.
Now write
\begin{equation}\label{10}
M:=A^{\mathrm{T}}(AA^{\mathrm{T}})^{-1}A\in R^{n\times n},N:=A^{\mathrm{T}}(AA^{\mathrm{T}})^{-1}\in R^{n\times m}
\end{equation}
we have
\begin{equation}\label{11}
(\mathrm{I}-M)(Qx^{*}+c+B^{\mathrm{T}}v^{*})+N(Ax^{*}-b)=0.
\end{equation}
By the projection theorem, $(v+\alpha(Bx-d))^{+}-v=0$, that is
\begin{equation}\label{12}
v^{*}=P_{\Omega}(v^{*}+\alpha(Bx-d))
\end{equation}
where $\alpha\in R^{+}$,$R^{+}={s|s>0,s\in R}$,$(v)^{+}=(v_{1},v_{2},...,v_{h})\in R^{h}$ and $(v_{i})^{+}=\max [v_{i},0]$.\\
Through linear transformation, we can deduce $\exists \gamma$ to make $(8)$ into the following formula
\begin{equation}\label{13}
(v+\alpha((Bx-d)-\gamma[(\mathrm{I}-M)(Qx^{*}+c+B^{\mathrm{T}}v^{*})+N(Ax^{*}-b)]_{h}))^{+}-v=0.
\end{equation}
We define $U_{1}=\{x\in R^{n}|-\infty\leq x\leq+\infty\},U_{2}=\{x\in R^{h}|0\leq x\leq+\infty\}$, and from $(10)$, we can deduce $NA=M$, so we have\\
$
\begin{array}{l}
x=P_{U_{1}}\{x-\alpha[(\mathrm{I}-M)(Qx+c+B^{\mathrm{T}}v)+N(Ax-b)]\}\\

\quad=P_{U_{1}}\{x-\alpha[(\mathrm{I}-M)(Qx+c+B^{\mathrm{T}}v)+Mx-Nb)]\}\\
\quad=P_{U_{1}}\{ ( \begin{array}{cc} I_{n} & 0_{n\times h}\end{array} )
\left( \begin{array}{c} x\\v \end{array}  \right)\\
\quad\quad-\alpha[((I_{N}-M_{n\times n})Q_{n\times n}+M_{n\times n}(I_{N}-M_{n\times n})B_{n\times h}^{T})
\left( \begin{array}{c} x\\v \end{array}  \right)\\
\quad\quad+(I_{N}-M_{n\times n})c-N_{n\times h}b ] \}\\

v=P_{U_{2}}\{v+\alpha((Bx-d)-\gamma [(\mathrm{I}-M)(Qx+c+B^{\mathrm{T}}v)+N(Ax-b)]_{h})\}\\

\quad=P_{U_{2}}\{ ( \begin{array}{cc} 0_{n\times h} & I_{n}\end{array} )
\left( \begin{array}{c} x\\v \end{array}  \right)-\alpha[(\gamma [(I_{N}-M_{n\times n})Q_{n\times n}+M_{n\times n}]_{h}\\
\quad\quad-B_{n\times h}\gamma [(I_{N}-M_{n\times n})B_{n\times h}^{T}]_{h})
\left( \begin{array}{c} x\\v \end{array}  \right)+d+\gamma [(I_{n}-M_{n\times n})c-N_{n\times h}b]_{h}] \}

\end{array}
$\\
Denote $U=\{x\in R^{n+h}|l\leq x\leq j\},l=\left( \begin{array}{c} -\infty_{n\times 1}\\0_{h\times 1} \end{array} \right),j=\left( \begin{array}{c} +\infty_{n\times 1}\\+\infty_{h\times 1} \end{array}  \right)$,
that is\\
$
\begin{array}{l}
\left( \begin{array}{c} x\\v \end{array}  \right)=P_{U}\{\left( \begin{array}{c} x\\v \end{array}  \right)-\alpha\\
\quad\quad\quad\quad\quad [\left( \begin{array}{cc} (I_{N}-M_{n\times n})Q_{n\times n}+M_{n\times n} & (I_{N}-M_{n\times n})B_{n\times h}^{T}\\\gamma [(I_{N}-M_{n\times n})Q_{n\times n}+M_{n\times n}]_{h}-B_{n\times h} & \gamma [(I_{N}-M_{n\times n})B_{n\times h}^{T}]_{h}  \end{array}  \right)
\\
\quad\quad\quad\quad\quad\left( \begin{array}{c} x\\v \end{array}  \right)+
\left( \begin{array}{c} (I_{n}-M_{n\times n})c-N_{n\times h}b\\
d+\gamma [(I_{n}-M_{n\times n})c-N_{n\times h}b]_{h} \end{array}  \right)]\}
\end{array}$\\
define\\
$$
\begin{array}{l}
y=\left( \begin{array}{c} x\\v \end{array}  \right),
p=\left( \begin{array}{c} (I_{n}-M_{n\times n})c-N_{n\times h}b\\
d+\gamma [(I_{n}-M_{n\times n})c-N_{n\times h}b]_{h} \end{array}  \right),\\
W=\left( \begin{array}{cc} (I_{N}-M_{n\times n})Q_{n\times n}+M_{n\times n} & (I_{N}-M_{n\times n})B_{n\times h}^{T}\\\gamma [(I_{N}-M_{n\times n})Q_{n\times n}+M_{n\times n}]_{h}-B_{n\times h} & \gamma [(I_{N}-M_{n\times n})B_{n\times h}^{T}]_{h}  \end{array}  \right),\\
\end{array}
$$\\
we finally get
\begin{equation}\label{14}
y=P_{U}(y-\alpha(Wy+p))
\end{equation}
After the above analysis, we get the time-varying neural network model to solve$(1)$
\begin{equation}\label{15}
\begin{array}{l}
\left\{
        \begin{array}{l}
\frac{dy}{dt}=-\kappa y(t)+(\kappa-1)P_{U}(y(t-\tau(t))-\alpha(Wy(t-\tau(t))+p)\\
\quad\quad\quad+P_{U}(y(t)-\alpha(Wy(t)+p))
\\
y(t)=\varphi(t),t\in[-\tau,0]
        \end{array}
\right.
\end{array}
\end{equation}
Where $\kappa>0$ is a scale parameter, $P_{U}:R^{n+h}\rightarrow U$ is the projection operator in the sense of Hilbert space, defined by $P_{U}(s)=\arg \min \limits_{z\in U} \parallel s-z \parallel, \forall y\in R^{n+h}$
, where $\| \|$ represents the Euclidean norm, $\tau>0$ denotes the transmission delay. $\alpha p$ is the network input item, $y$ as the network output item, $I-\alpha W$ is connected to weight. If we use  $\Omega ^{\aleph}$ to represent the set of equilibrium points of  $(15)$ and $\Omega ^{*}$ to represent the set of optimal solutions of  $(1)$. Then we will get that if $x^{*}\in \Omega ^{\aleph}$, then there is a $v^{*}$ such that $y^{*}=(x^{*T},v^{*T})^{T}$ satisfies the projection equation $(14)$, which means that $x^{*}\in \Omega ^{*}$. So we have $\Omega ^{\aleph}=\Omega ^{*}$.\\
We Give the following lemma and definitions in preparation for the following discussion.
\begin{lemma}\cite{book23}
If there is a solution $y(t)$ for $(15)$ that satisfies the initial condition $y(t)=\varphi(t), \forall\varphi(t)\in C([-\tau,0],R^{n})t\in[-\tau,0]$, and the solution $y(t)$ is bounded on $[0,T]$, then the existence interval of $y(t)$ is $[0,\infty]$.
\end{lemma}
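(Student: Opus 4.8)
The plan is to establish this as a standard continuation (global extension) theorem for the delay differential system $(15)$, exploiting the regularity of its right-hand side. First I would denote the right-hand side of $(15)$ by $F(y(t),y(t-\tau(t)))$ and verify that $F$ is Lipschitz continuous in both arguments. The key observation is that the projection operator $P_{U}$ onto the closed convex set $U$ is non-expansive in the Euclidean norm, that is $\|P_{U}(a)-P_{U}(b)\|\leq\|a-b\|$, while the inner map $y\mapsto y-\alpha(Wy+p)$ is affine with Lipschitz constant $\|I-\alpha W\|$. Composing these shows each projection term is Lipschitz, and the linear term $-\kappa y(t)$ is trivially so; hence $F$ is globally Lipschitz. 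With this in hand, local existence and uniqueness follow from the Picard--Lindel\"of theorem for functional differential equations: for the initial function $\varphi\in C([-\tau,0],R^{n+h})$ there is a unique solution on some maximal forward interval $[0,T_{\max})$.

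Next I would argue by contradiction, assuming $T_{\max}<\infty$. By hypothesis the solution is bounded on every finite interval $[0,T]$, so in particular the trajectory $\{y(t):t\in[0,T_{\max})\}$ is contained in a compact set $K\subset R^{n+h}$. Since $F$ is continuous and $y$ is bounded, the derivative $\dot{y}$ is bounded on $[0,T_{\max})$, so $y$ is uniformly continuous there and the one-sided limit $y(T_{\max}^{-}):=\lim_{t\to T_{\max}^{-}}y(t)$ exists. Taking this limit value as a new initial datum, together with the tail of the solution on $[T_{\max}-\tau,T_{\max}]$ as the new history function, the local existence theorem furnishes a continuation of $y$ onto $[T_{\max},T_{\max}+\delta)$ for some $\delta>0$. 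This contradicts the maximality of $T_{\max}$, forcing $T_{\max}=\infty$, which is the desired conclusion that the existence interval is $[0,\infty)$.

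The main obstacle I anticipate lies in the continuation step: one must check that the Lipschitz and continuity hypotheses on $F$ hold uniformly over the compact set $K$ so that the local existence theorem can be reapplied at the endpoint $T_{\max}$, and that the delayed structure is handled correctly, namely that the history on $[T_{\max}-\tau,T_{\max}]$ is glued consistently so the restarted solution genuinely extends the original rather than merely agreeing with it at a point. This is precisely where the boundedness hypothesis is indispensable, since it rules out finite-time blow-up and guarantees that the limiting value $y(T_{\max}^{-})$ is a legitimate initial condition; without it the solution could escape to infinity as $t\to T_{\max}^{-}$ and no extension would be possible.
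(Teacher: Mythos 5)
Your proof is essentially correct, but note that the paper itself does not prove this lemma at all: it is quoted directly from Hale and Lunel's textbook on functional differential equations \cite{book23} and used as a black box in Theorem 1 (where boundedness of $\|y(t)\|$ on $[0,T]$ is established via Bellman's inequality and then Lemma 1 is invoked to get global existence). So there is no in-paper argument to compare against; what you have reconstructed is the standard continuation theorem for delay differential equations, which is precisely the textbook result being cited. Your two ingredients are the right ones: (i) the right-hand side is globally Lipschitz because $P_{U}$ is non-expansive and $y\mapsto y-\alpha(Wy+p)$ is affine, giving local existence and uniqueness; (ii) boundedness on the maximal interval plus boundedness of $\dot{y}$ gives existence of the left limit at $T_{\max}$, so the solution restarts and maximality is contradicted. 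Two small points deserve attention. First, for the delayed argument to make sense and for the restart to be legitimate you need the implicit standing assumptions $0\leq\tau(t)\leq\tau$ and continuity of $\tau(\cdot)$, so that the history segment at time $T_{\max}$ lives in $C([-\tau,0],R^{n+h})$; you gesture at this in your final paragraph but it should be stated as a hypothesis rather than an anticipated obstacle. Second, your own step (i) actually proves more than the lemma needs: since the right-hand side is \emph{globally} Lipschitz, it has linear growth, and Gronwall's inequality alone rules out finite-time blow-up, so for this particular network global existence holds with no boundedness hypothesis whatsoever. The boundedness assumption is only needed for the general continuation theorem as stated in \cite{book23}; the paper's route (prove boundedness, then cite the lemma) and your route are therefore both somewhat redundant for system $(15)$, and your observation could be used to shortcut Theorem 1 itself.
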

\begin{definition}\cite{article26}
If $\exists\rho>0, \eta>0$ such that the following inequality holds\\
$$\|y(t)-y^{*}\|\leq \rho\|\varphi-y^{*}\|e^{-\eta t},\forall t\geq0$$, \\
where $\|\varphi-y^{*}\|=\sup \limits_{-\tau \leq t \leq0}[(\varphi(t)-y^{*})^{T}(\varphi(t)-y^{*})]^{\frac{1}{2}}$, then the equilibrium point $y^{*}$ of the time-varying Delay Neural Network defined by $(15)$ is globally exponentially stable.
\end{definition}

\section{Existence and uniqueness}
\begin{theorem}
For $\forall\varphi\in C([-\tau(t),0],R^{n+p})$, the solution of the neural network $(15)$ exists and is unique, $t \in [0,+\infty]$.
\end{theorem}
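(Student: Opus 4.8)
The plan is to recast (15) as an abstract delay differential equation $\dot y(t) = F(y(t), y(t-\tau(t)))$ with
$$F(y,\bar y) = -\kappa y + (\kappa-1)P_U(\bar y - \alpha(W\bar y + p)) + P_U(y - \alpha(Wy + p)),$$
and to prove existence and uniqueness in two stages: first a local result via the Banach contraction principle, then extension to $[0,+\infty)$ using Lemma 1. The central tool is the non-expansiveness of the projection onto the closed convex set $U$, namely $\|P_U(a) - P_U(b)\| \leq \|a-b\|$ for all $a,b \in R^{n+h}$. I would state and use this property first, since it controls both projection terms appearing in $F$.

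First I would establish a global Lipschitz bound for $F$. Taking two pairs of states $(y_1,\bar y_1)$ and $(y_2,\bar y_2)$, the linear term contributes $\kappa\|y_1-y_2\|$, while each projection term is estimated by non-expansiveness followed by the operator-norm bound $\|(I-\alpha W)z\| \leq \|I-\alpha W\|\,\|z\|$. This yields
$$\|F(y_1,\bar y_1) - F(y_2,\bar y_2)\| \leq (\kappa + \|I-\alpha W\|)\|y_1-y_2\| + |\kappa-1|\,\|I-\alpha W\|\,\|\bar y_1-\bar y_2\|,$$
so $F$ is globally Lipschitz in both arguments with explicit constants. This step is routine but essential, as it drives every subsequent estimate.

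Next I would prove local existence and uniqueness. On a first interval $[0,T_0]$ with $T_0$ small, I rewrite (15) in integral form $y(t) = \varphi(0) + \int_0^t F(y(s),y(s-\tau(s)))\,ds$ and define the associated operator on $C([0,T_0],R^{n+h})$ equipped with the sup norm. Using the Lipschitz bound above, this operator is a contraction for $T_0$ sufficiently small (the delayed values $y(s-\tau(s))$ being either already determined by $\varphi$ or controlled by the same Lipschitz constant), so the Banach fixed point theorem supplies a unique local solution.

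Finally I would extend the solution to $[0,+\infty)$ by an a priori bound. Applying the Lipschitz estimate to $F(y(t),y(t-\tau(t)))$ against $F(0,0)$ and integrating gives
$$\|y(t)\| \leq C_1 + C_2\int_0^t(\|y(s)\| + \|y(s-\tau(s))\|)\,ds;$$
splitting the delayed integral over $[-\tau,0]$ (where $y=\varphi$ is bounded) and $[0,t]$, a Gronwall-type argument shows $\|y(t)\|$ stays finite on every compact interval $[0,T]$. Hence the solution is bounded on each $[0,T]$, and Lemma 1 then forces the maximal existence interval to be $[0,+\infty)$, while the contraction argument secures uniqueness throughout. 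The main obstacle is the bookkeeping forced by the time-varying delay $\tau(t)$: one must ensure that $t-\tau(t)$ behaves well enough (e.g.\ stays bounded below, so that delayed arguments never leave the region where $y$ is already known or already bounded) for both the method-of-steps extension and the Gronwall closure to go through cleanly.
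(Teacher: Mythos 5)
Your proposal is correct, and for the existence half it follows essentially the paper's own route: the paper also writes the network as $\dot y=Y(y(t))$, bounds $\|Y(y(t))\|$ affinely in $\|y(t)\|$ and $\|y(t-\tau(t))\|$ using non-expansiveness of $P_U$ (comparing against the equilibrium $y^{*}$ where you compare against $F(0,0)$ --- same device, different base point), integrates, splits the delayed integral over $[-\tau,0]$ and $[0,t]$, applies Bellman/Gronwall to get boundedness on $[0,T]$, and invokes Lemma 1 to extend. Where you genuinely diverge is uniqueness, and your route is the stronger one. The paper never establishes local existence at all (it tacitly assumes a solution on $[0,T]$ before bounding it), and it proves uniqueness by writing two putative solutions through the variation-of-constants formula, subtracting, and extracting the global contraction factor $\frac{(|\kappa-1|+1)}{\kappa}\|I-\alpha W\|\sup_{t}(1-e^{-\kappa t})$; the key inequality $(19)$, asserting this factor is less than $1$, is stated without justification, and since $\sup_{t}(1-e^{-\kappa t})=1$ it is equivalent to the stability condition $(|\kappa-1|+1)\|I-\alpha W\|-\kappa<0$ of Theorem 2 --- a hypothesis nowhere assumed in Theorem 1. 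Your Picard--Banach contraction on a short interval, combined with the method of steps, delivers local existence and uniqueness unconditionally, with no restriction on $\kappa$, $\alpha$, $W$; so your argument simultaneously fills the paper's missing local-existence step and removes the hidden hypothesis in its uniqueness step. The caveat you flag --- that $t-\tau(t)$ must remain in $[-\tau,\infty)$ so delayed arguments stay where the solution is already determined --- is exactly the standing assumption $0\le\tau(t)\le\tau$ that the paper makes implicitly through its initial condition on $[-\tau,0]$, so it costs you nothing beyond what the paper already requires.
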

\begin{proof}
Let\\
 $
 \begin{array}{l}
 Y(y(t))=-\kappa y(t)+(\kappa-1)P_{U}(y(t-\tau(t))-\alpha(Wy(t-\tau(t))+p)+P_{U}(y(t)\\
 \quad\quad\quad\quad\quad-\alpha(Wy(t)+p))
 \end{array}
 $,\\
thus $\frac{dy}{dt}=Y(y(t))$.\\
If $y^{*}$ is used to represent the equilibrium point of the time-varying delay neural network $(15)$, then we can get that
$$
\begin{array}{l}
\|Y(y(t))\|=\|Y(y(t))-Y(y^{*})\|\\
\quad\quad\quad\quad\leq\kappa\|y(t)-y^{*}\|+(\kappa-1)\|P_{U}(y(t-\tau(t))-\alpha(Wy(t-\tau(t))+p)\\
\quad\quad\quad\quad\quad\quad-P_{U}(y^{*}-\alpha(Wy^{*}+p)))\|+\|P_{U}(y(t)-\alpha(Wy(t)+p)\\
\quad\quad\quad\quad\quad\quad-P_{U}(y^{*}-\alpha(Wy^{*}+p)))\|\\
\quad\quad\quad\quad\leq\kappa\|y(t)-y^{*}\|+(\kappa-1)(\|y(t-\tau(t))-y^{*}\|-\|\alpha W\|\|y(t)-y^{*}\|)\\
\quad\quad\quad\quad\quad\quad+(\|y(t)-y^{*}\|-\|\alpha W\|\|y(t)-y^{*}\|)\\
\quad\quad\quad\quad\leq\kappa(2+\|\alpha W\|)y^{*}+(\kappa+(1+\|\alpha W\|))y(t)\\
\quad\quad\quad\quad\quad\quad+(\kappa-1)(1+\|\alpha W\|)y(t-\tau(t))
\end{array}
$$
Let $\beta_{1}=\kappa(2+\|\alpha W\|),\beta_{2}=\kappa+(1+\|\alpha W\|),\beta_{2}=(\kappa-1)(1+\|\alpha W\|)$,
since
$$ y(x)=\left\{
\begin{array}{l}
y(0)+\int^{t}_{0}Y(y(s))ds,t\in[0,T] \\
\varphi(t),t\in[-\tau,0] \\
\end{array}
\right.,
$$
it can be concluded that\\
$$
\begin{array}{l}
\|y(t)\|
\leq\|\varphi(t)\|+\int^{t}_{0}\|Y(y(s))\|ds\\
\quad\quad\quad\leq\|\varphi(t)\|+\int^{t}_{0}\beta_{1}\|y^{*}\|+\beta_{2}\|y(s)\|+\beta_{3}\|y(s-\tau(s)\|ds\\
\quad\quad\quad\leq\|\varphi(t)\|+\beta_{1}\|y^{*}\|T+\beta_{2}\int^{t}_{0}\|y(s)\|ds+\beta_{3}\int^{t-\tau(s)}_{-\tau(s)}\|y(s)\|ds\\
\quad\quad\quad\leq(1+\beta_{3}\tau)\|\varphi(t)\|+\beta_{1}\|y^{*}\|T+(\beta_{2}+\beta_{3})\int^{t}_{0}\|y(s)\|ds
\end{array}
$$
From Bellman's inequality, we have
$$\|y(t)\|\leq[(1+\beta_{3}\tau)\|\varphi(t)\|+\beta_{1}\|y^{*}\|T]e^{(\beta_{2}+\beta_{3})t},
t\in[0,T].$$
That is, the boundedness of $\|y(t)\|$ on $[0,T]$ has been proved.
By Lemma 1, $\exists y(t)$ for $(14)$ on $[0,\infty]$.
A discussion on the uniqueness of $y(t)$ is given below.
Suppose the solution is not unique, then there is a solution with $y(t),\tilde{y(t)},(y(t)\neq\tilde{y(t)})$ being $(15)$. Express the solutions of $y(t),\tilde{y(t)}$ through the variation-of-constants formula
\begin{equation}
\begin{array}{l}
y(t)=e^{-\kappa t}y(0)+(\kappa-1)\int^{t}_{0}e^{-\kappa(t-s)}P_{U}(y(s-\tau(s))-\alpha(Wy(s-\tau(s))+p))ds
\\ \quad\quad\quad+\int^{t}_{0}e^{-\kappa(t-s)}P_{U}(y(s)-\alpha(Wy(s)+p))ds
\end{array}
\end{equation}
\begin{equation}
\begin{array}{l}
\tilde{y(t)}=e^{-\kappa t}y(0)+(\kappa-1)\int^{t}_{0}e^{-\kappa(t-s)}P_{U}(\tilde{y(s-\tau(s))}-\alpha(W\tilde{y(s-\tau(s))}+p))ds
\\ \quad\quad\quad+\int^{t}_{0}e^{-\kappa(t-s)}P_{U}(\tilde{y(s)}-\alpha(W\tilde{y(s)}+p))ds
\end{array}
\end{equation}
Subtract $(16)$ and $(17)$, we have

$$
\begin{array}{l}
\sup\limits_{t}\|y(t)-\tilde{y(t)}\| \leq \sup\limits_{t} \|(\kappa-1)\int^{t}_{0}e^{-\kappa(t-s)}P_{U}(y(s-\tau(s))-\tilde{y(s-\tau(s))}\\
\quad\quad\quad\quad\quad\quad\quad\quad\quad-\alpha W(y(s-\tau(s))-\tilde{y(s-\tau(s))}))ds\|\\
\quad\quad\quad\quad\quad\quad\quad\quad\leq \sup\limits_{t}|\kappa-1|\int^{t}_{0}e^{-\kappa(t-s)}\|I-\alpha W\| \|y(s-\tau(s))-\tilde{y(s-\tau(s))}\|ds\\
\quad\quad\quad\quad\quad\quad\quad\quad\quad+\sup\limits_{t}\int^{t}_{0}e^{-\kappa(t-s)}\|I-\alpha W\| \|y(s)-\tilde{y(s)}\|ds\\
\quad\quad\quad\quad\quad\quad\quad\quad\leq |\kappa-1|\|I-\alpha W\| \sup\limits_{t}\|y(t-\tau(t))-\tilde{y(t-\tau(t))}\|\int^{t}_{0}e^{-\kappa(t-s)}ds\\
\quad\quad\quad\quad\quad\quad\quad\quad\quad+\|I-\alpha W\|\sup\limits_{t}\|y(t)-\tilde{y(t)}\|\int^{t}_{0}e^{-\kappa(t-s)}ds\\
\quad\quad\quad\quad\quad\quad\quad\quad\leq |\kappa-1|\|I-\alpha W\| \sup\limits_{t}\|y(t)-\tilde{y(t)}\|\int^{t}_{0}e^{-\kappa(t-s)}ds\\
\quad\quad\quad\quad\quad\quad\quad\quad\quad+\|I-\alpha W\|\sup\limits_{t}\|y(t)-\tilde{y(t)}\|\int^{t}_{0}e^{-\kappa(t-s)}ds\\
\quad\quad\quad\quad\quad\quad\quad\quad\leq \frac{|\kappa-1|+1}{\kappa}\|I-\alpha W\|\sup\limits_{t}(1-e^{-\kappa t})\|y(t)-\tilde{y(t)}\|
\end{array}
$$
which implies that
\begin{equation}
(1-\frac{|\kappa-1|+1}{\kappa}\|I-\alpha W\|\sup\limits_{t}(1-e^{-\kappa t}))\sup\limits_{t}\|y(t)-\tilde{y(t)}\|\leq 0.
\end{equation}
Yet
\begin{equation}
1-\frac{|\kappa-1|+1}{\kappa}\|I-\alpha W\|\sup\limits_{t}(1-e^{-\kappa t})>0.
\end{equation}
According to $(18)$ and $(19)$, we have
$$\sup\limits_{t}\|y(t)-\tilde{y(t)}\|\leq 0.$$
Therefore, $y(t)=\tilde{y(t)}$, this contradicts the above assumption. That is to say, the $(15)$ has a unique solution.
\end{proof}

\section{Global exponetial stability}

\begin{theorem}
When condition $(|\kappa-1|+1)\|I-\alpha W\|-\kappa<0$, the equilibrium point $y^{*}$ of neural networks with time-varying delays has global exponential stability.
\end{theorem}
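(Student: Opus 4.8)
The plan is to reduce the statement to a Halanay-type differential inequality for the error variable and then read off the exponential rate from the stability condition. First I would set $z(t)=y(t)-y^{*}$. Because $y^{*}$ is an equilibrium of $(15)$ it satisfies the projection equation $(14)$, i.e. $P_{U}(y^{*}-\alpha(Wy^{*}+p))=y^{*}$; substituting this identity into the right-hand side of $(15)$ lets me rewrite $\dot z(t)$ purely as $-\kappa z(t)$ plus two \emph{differences} of projections, one evaluated at $y(t)$ versus $y^{*}$ and one at $y(t-\tau(t))$ versus $y^{*}$. This is the same algebraic manoeuvre already used to verify $\Omega^{\aleph}=\Omega^{*}$.

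Next I would estimate each projection difference using the non-expansiveness of $P_{U}$ (projection onto the convex set $U$ is $1$-Lipschitz in the Euclidean norm) together with $\|(I-\alpha W)(y-y^{*})\|\le\|I-\alpha W\|\,\|y-y^{*}\|$, exactly as in the proof of Theorem 1. Taking the upper right Dini derivative of $\|z(t)\|$ along the trajectory then gives
$$D^{+}\|z(t)\|\le -(\kappa-\|I-\alpha W\|)\,\|z(t)\|+|\kappa-1|\,\|I-\alpha W\|\,\|z(t-\tau(t))\|.$$
Writing $a=\kappa-\|I-\alpha W\|$ and $b=|\kappa-1|\,\|I-\alpha W\|$, the hypothesis $(|\kappa-1|+1)\|I-\alpha W\|-\kappa<0$ is \emph{exactly} the statement $b<a$, and it simultaneously forces $a>0$, so $a>b\ge0$.

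With the inequality in this Halanay form the remainder is standard. Since $0\le\tau(t)\le\tau$ I would bound $\|z(t-\tau(t))\|$ by $\sup_{t-\tau\le s\le t}\|z(s)\|$, and take $\eta$ to be the unique positive root of $\eta=a-b\,e^{\eta\tau}$; such a root exists because the right-hand side equals $a-b>0$ at $\eta=0$ and tends to $-\infty$ as $\eta\to\infty$. A continuity argument — suppose $\|z(t)\|\,e^{\eta t}$ first attains $\|\varphi-y^{*}\|$ at some $t_{0}>0$ and estimate $D^{+}(\|z(t)\|e^{\eta t})$ there to reach a contradiction — then yields $\|z(t)\|\le\|\varphi-y^{*}\|\,e^{-\eta t}$ for all $t\ge0$, which is precisely global exponential stability in the sense of Definition 1 with $\rho=1$ and rate $\eta$. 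To mirror the variation-of-constants computation of Theorem 1 instead, one can feed the ansatz $\|z(t)\|\le\rho\|\varphi-y^{*}\|e^{-\eta t}$ back into the integral representation of $z$; the requirement then reduces to $\eta<a-b\,e^{\eta\tau}$, which the hypothesis makes solvable for some $\eta\in(0,\kappa)$ with $\rho$ chosen large.

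The main obstacle is the non-smoothness introduced by the projection: $P_{U}$ composed with the affine map $y\mapsto y-\alpha(Wy+p)$ is Lipschitz but not differentiable, so the derivative step must be carried out with one-sided Dini derivatives (or avoided entirely by working with the integral form, as in Theorem 1). The secondary technical point is the time-varying delay $\tau(t)$: controlling $\|z(t-\tau(t))\|$ by the supremum over the window $[t-\tau,t]$ is what produces the factor $e^{\eta\tau}$ in the transcendental equation for $\eta$, and the strict inequality $b<a$ — equivalently the stated condition — is exactly what guarantees this equation has a positive solution and hence a genuine exponential decay rate.
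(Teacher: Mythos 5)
Your proof is correct, but it takes a genuinely different route from the paper's. The paper works with the variation-of-constants (integral) representation of $y(t)-y^{*}$, estimates the two projection terms in norm, and closes the argument with a Gronwall--Bellman step after multiplying through by $e^{\kappa t}$; the decay rate it reads off is exactly $\kappa-(|\kappa-1|+1)\|I-\alpha W\|$, independent of the delay bound $\tau$. You instead derive a Halanay-type differential inequality $D^{+}\|z(t)\|\le -a\|z(t)\|+b\|z(t-\tau(t))\|$ with $a=\kappa-\|I-\alpha W\|$ and $b=|\kappa-1|\,\|I-\alpha W\|$, observe that the stated condition is precisely $b<a$ (hence $a>b\ge 0$), and extract the rate $\eta$ as the positive root of $\eta=a-be^{\eta\tau}$, which does depend on $\tau$. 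Two things your route buys. First, your decomposition of $\dot z$ into genuine \emph{differences} of projections evaluated at $y$ versus $y^{*}$ is algebraically sound, whereas the paper pushes $y^{*}$ inside the projection argument while keeping $+p$ there --- an identity that does not hold for the nonlinear operator $P_{U}$ --- and as a result spurious $\alpha p$ terms contaminate its subsequent estimates and strictly speaking break the Gronwall structure it invokes. Second, the Halanay supremum over the window $[t-\tau,t]$ handles the time-varying delay cleanly, while the paper's change of variables in the delayed integral (the step producing $\int_{-\tau(s)}^{t}\cdots e^{-\kappa(t-s-\tau(s))}ds$) is not justified for a general $\tau(t)$. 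What the paper's route would buy, if carried out carefully, is a delay-independent exponential rate with constants explicit in $\kappa$ and $\|I-\alpha W\|$; your rate degrades as $\tau$ grows, but your argument, via either the Dini-derivative contradiction or the integral-ansatz variant you sketch, is the one that actually withstands scrutiny.
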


\begin{proof}
Since $y^{*}$ is the equilibrium point of $(15)$, the following equation can be obtained
$$
\begin{array}{l}
\frac{d(y(t)-y^{*})}{dt}=-\kappa(y(t)-y^{*})+(\kappa-1)P_{U}((y(t-\tau(t))-y^{*})\\
\quad\quad\quad\quad\quad\quad-\alpha(W(y(t-\tau(t))-y^{*})+p))\\
\quad\quad\quad\quad\quad\quad+P_{U}((y(t)-y^{*})-\alpha(W(y(t)-y^{*})+p))
\end{array}
$$
From the above formula, $y(t)-y^{*}$ can be expressed as the following form through the variation-of-constants
$$
\begin{array}{l}
y(t)-y^{*}=e^{-\kappa t}(\varphi-y^{*})+(\kappa-1)\int^{t}_{0}P_{U}((y(s-\tau(s))-y^{*})\\
\quad\quad\quad\quad\quad\quad-\alpha(W(y(s-\tau(s))-y^{*})+p))e^{-\kappa(t-s)}ds\\
\quad\quad\quad\quad\quad\quad+\int^{t}_{0}P_{U}((y(s)-y^{*})-\alpha(W(y(s)-y^{*})+p))e^{-\kappa (t-s)}ds
\end{array}
$$
then
$$
\begin{array}{l}
\|y(t)-y^{*}\|\leq e^{-\kappa t}\|\varphi-y^{*}\|+|\kappa-1|\int^{t}_{0}\|(y(s-\tau(s))-y^{*})\\
\quad\quad\quad\quad\quad\quad-\alpha(W(y(s-\tau(s))-y^{*})+p)\|e^{-\kappa(t-s)}ds\\
\quad\quad\quad\quad\quad\quad+\int^{t}_{0}\|(y(s)-y^{*})-\alpha(W(y(s)-y^{*})+p)\|e^{-\kappa (t-s)}ds\\
\quad\quad\quad\quad\quad=e^{-\kappa t}\|\varphi-y^{*}\|+|\kappa-1|\int^{t}_{-\tau(s)}\|(I-\alpha W)(y(s)-y^{*})-\alpha p\|\\
\quad\quad\quad\quad\quad\quad e^{-\kappa (t-s-\tau(s))}ds+\int^{t}_{0}\|(I-\alpha W)(y(s)-y^{*})-\alpha p\|e^{-\kappa(t-s)}ds\\
\quad\quad\quad\quad\quad\leq e^{-\kappa t}\|\varphi-y^{*}\|+\frac{|\kappa-1|}{\kappa}(\|I-\alpha W\| \|\varphi-y^{*}\|e^{-\kappa t})-\frac{|\kappa-1|}{\kappa}\alpha pe^{-\kappa t}\\
\quad\quad\quad\quad\quad\quad+(|\kappa-1|+1)\int^{t}_{0}\|(I-\alpha W)(y(s)-y^{*})-\alpha p\|e^{-\kappa(t-s)}ds\\
\quad\quad\quad\quad\quad=e^{-\kappa t}(\frac{(\kappa+|\kappa-1|\|I-\alpha W)\|\varphi-y^{*}\|+|\kappa-1|\alpha p}{\kappa})\\
\quad\quad\quad\quad\quad\quad+(|\kappa-1|+1)\int^{t}_{0}\|(I-\alpha W)(y(s)-y^{*})-\alpha p\|e^{-\kappa(t-s)}ds
\end{array}
$$
 As in the following inequality, $e^{\kappa t}$ is moved to the left-hand side of the inequality
 $$
\begin{array}{l}
\|y(t)-y^{*}\|e^{\kappa t} \leq \frac{(\kappa+|\kappa-1|\|I-\alpha W)\|\varphi-y^{*}\|+|\kappa-1|\alpha p}{\kappa}\\
\quad\quad\quad\quad\quad\quad\quad\quad+(|\kappa-1|+1)\int^{t}_{0}\|(I-\alpha W)(y(s)-y^{*})-\alpha p\|e^{\kappa s}ds\\
\quad\quad\quad\quad\quad\quad\quad\leq \frac{(\kappa+|\kappa-1|\|I-\alpha W)\|\varphi-y^{*}\|+|\kappa-1|\alpha p}{\kappa}e^{(|\kappa-1|+1)\|I-\alpha W)\|t}
\end{array}
$$
That is
$$
\begin{array}{l}
\|y(t)-y^{*}\| \leq \frac{(\kappa+|\kappa-1|\|I-\alpha W)\|\varphi-y^{*}\|+|\kappa-1|\alpha p}{\kappa}e^{[(|\kappa-1|+1)\|I-\alpha W)\|-\kappa]t}
\end{array}
$$
So we can obtained that if $(|\kappa-1|+1)\|I-\alpha W)\|-\kappa<0$, the time-varying delay neural network defined by $(15)$ is globally exponentially stable.
\end{proof}
\section{Simulation results}
\begin{example}
Consider the following quadratic programming
$$
\begin{array}{l}
\min f(x)=0.36x^{2}_{1}+0.3x^{2}_{2}+0.2x^{2}_{3}-x_{1}+0.6x_{2}+0.5x_{3}\\
Subject\quad to
\left\{
        \begin{array}{l}
x_{1}-x_{2}+x_{3}=6\\
             0.5x_{1}-0.7x_{2}+0.2x_{3}\leq 5\\
             \frac{1}{4}x_{1}+ \frac{2}{5}x_{2}- \frac{3}{5}x_{3}\leq 7
        \end{array}
\right.
\end{array}
$$
Let $Q=\left( \begin{array}{ccc}
0.72 & 0 & 0\\0 & 0.6 & 0\\0 & 0 & 0.4
\end{array}  \right)$,
$c=\left( \begin{array}{c}
-1\\ 0.6\\ 0.5
\end{array}  \right)$,
$A=\left( \begin{array}{ccc}
1 & -1 & 1
\end{array}  \right)$,$b=6$,
$B=\left( \begin{array}{ccc}
0.5 & -0.7 & 0.2\\ \frac{1}{4} & \frac{2}{5} & \frac{3}{5}
\end{array}  \right)$,
$d=\left( \begin{array}{c}
5 \\ 7
\end{array}  \right)$. The eigenvalues of $Q$ can be calculated as $\lambda_{1}=0.4000$, $\lambda_{2}=0.6000$, $\lambda_{3}=0.7200$. The optimal solution for this example can be calculated to be $x^{*}=\left( \begin{array}{ccc}
0.4000 & 0.6000 & 0.7200
\end{array}  \right)^{T}$.
Next we choose $u=0.003$, $\alpha=0.45$, $\gamma=1$, $\kappa=2$, and calculated the
$$
\begin{array}{c}
M=\left( \begin{array}{ccc}
0.3333 & -0.3333 & 0.3333\\-0.3333 & 0.3333 & -0.3333\\0.3333 & -0.3333 & 0.3333
\end{array}  \right),
N=\left( \begin{array}{c}
0.3333 \\ -0.3333 \\ 0.3333
\end{array}  \right),\\
W=\left( \begin{array}{ccccc}
0.8133 & -0.1333 & 0.2000 & 0.0333 & 0.5556\\
-0.0933 & 0.7333 & -0.2000 & -0.2333 & 0.1778\\
0.0933 & -0.1333 & 0.6000 & 0.2667 & -0.3778\\
0.3133 & 1.1667 & 0.4000 & -0.4667 & 0.3556\\
0.4800 & 0.0667 & 1.2000 & -0.4667 & 0.3556
\end{array}  \right)
\end{array}
$$,
$(|\kappa-1|+1)\|I-\alpha W)\|-\alpha<0$,
through Theorem 4.1, we can know that the equilibrium point of the time-delay neural network $(15)$ is globally exponentially stable. We obtain the state trajectory of the time-delay neural network $(15)$ corresponding to Example 1 through Matlab2016a. The trajectory corresponds to 10 sets of random initial functions, and $\tau=0.365$. From Figure1\ref{fig:example1}, we can see that the state trajectory of the neural network globally converges to the optimal solution of the quadratic programming in Example 1.
 \begin{figure}
  \centering
  \includegraphics[width=5in]{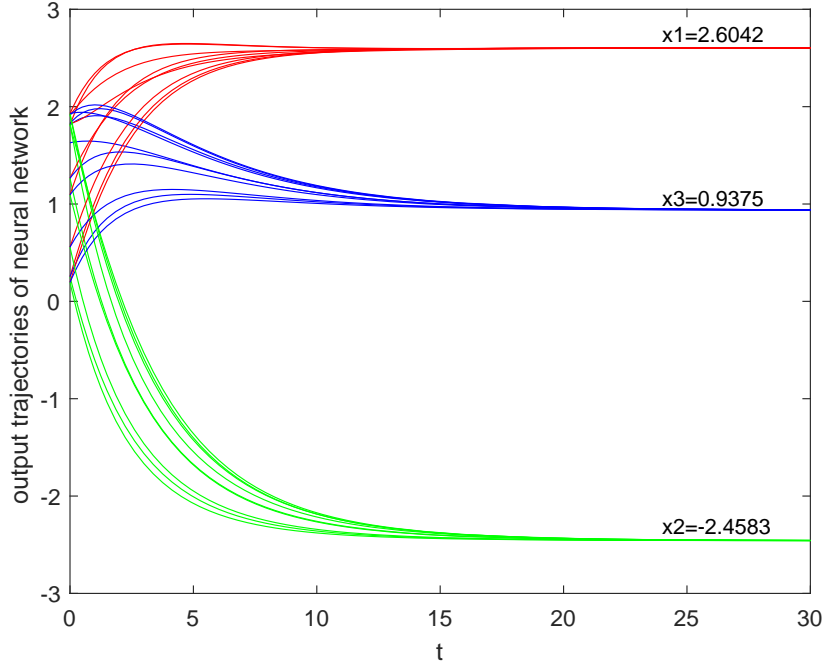}
  \caption{The state trajectory of the time-delay neural network corresponding to Example 1}\label{fig:example1}
\end{figure}

\end{example}

\begin{example}
Consider the following quadratic programming
$$
\begin{array}{l}
\min f(x)=0.7x^{2}_{1}+0.6x^{2}_{2}+0.25x^{2}_{3}+2x^{2}_{4}+0.35x_{1}x_{2}+0.45x_{1}x_{3}+0.25x_{2}x_{3}\\
\quad\quad\quad+\frac{1}{9}x_{2}x_{4}+0.36x_{1}+0.79x_{2}-9x_{3}-8x_{4}\\
Subject\quad to
\left\{
        \begin{array}{l}
x_{1}+0.5x_{2}-x_{3}-0.95x_{4}=4\\
             x_{1}+0.2x_{2}-0.3x_{3}+0.6x_{4}\leq 4.5\\
             -0.6x_{1}+ x_{2}+0.13x_{3}-0.3x_{4}\leq 3.5
        \end{array}
\right.
\end{array}
$$
Let
$$
\begin{array}{c}
Q=\left( \begin{array}{cccc}
1.4 & 0.35 & 0.45 & 0\\0.35 & 1.2 & 0.25 & \frac{1}{9}\\
0.45 & 0.25 & 0.5 & 0\\0 & \frac{1}{9} & 0 & 4
\end{array}  \right),
c=\left( \begin{array}{c}
0.36\\ 0.79\\ -9\\ -8
\end{array}  \right),
B=\left( \begin{array}{cccc}
1 & 0.2 & -0.3 & 0.6\\ -0.6 & 1 & 0.13 & -0.3
\end{array}  \right),\\
A=\left( \begin{array}{cccc}
1 & 0.5 & -1 & -0.95
\end{array}  \right),
b=4,
d=\left( \begin{array}{c}
4.5 \\ 3.5
\end{array}  \right)
\end{array}
$$. The eigenvalues of $Q$ can be calculated as $\lambda_{1}=0.3012$, $\lambda_{2}=0.9396$, $\lambda_{3}=1.8547$, $\lambda_{4}=4.0045$. The optimal solution for this example can be calculated to be $x^{*}=\left( \begin{array}{cccc}
2.6080 & 1.8757 & -0.5792 & 0.1317
\end{array}  \right)^{T}$.
Next we choose $u=0.001$, $\alpha=0.75$, $\gamma=1$, $\kappa=2$, and calculated the
$$
\begin{array}{c}
M=\left( \begin{array}{cccc}
0.3172 & 0.1586 & -0.3172 & -0.3013\\
0.1586 & 0.0793 & -0.1586 & -0.1507\\
-0.3172 & -0.1586 & 0.3172 & 0.3013\\
-0.3013 & -0.1507 & 0.3013 & 0.2863
\end{array}  \right),
W=\left( \begin{array}{c}
0.3172 \\ 0.1586 \\ -0.3172 \\-0.3013
\end{array}  \right),\\
W=\left( \begin{array}{cccccc}
1.3603 & 0.3200 & 0.1090 & 0.8864 & 0.7367 &-0.6174\\
0.3302 & 1.1850 & 0.0795 & 0.5543 & 0.0684 &0.9913\\
0.4897 & 0.2800 & 0.8410 & -0.8864 & -0.0376 &0.1474\\
0.0377 & 0.1396 & 0.3239 & 3.1579 & 0.8501 &-0.2834\\
1.2178 & 1.7246 & 1.6534 & 3.1122 & 1.6185 &0.2379\\
2.2178 & 0.9246 & 1.2234 & 4.0122 & 1.6185 &0.2379
\end{array}  \right),
\end{array}
$$
$(|\kappa-1|+1)\|I-\alpha W)\|-\alpha<0$,
through the Theorem 4.1, we would know that the equilibrium point of the time-delay neural network $(15)$ is globally exponentially stable. We obtain the state trajectory of the time-delay neural network $(15)$ corresponding to Example 2 through Matlab2016a. The trajectory corresponds to 20 sets of random initial functions, and $\tau=0.03$. From the Figure2\ref{fig:example2}, we can see that the state trajectory of the neural network globally converges to the optimal solution of the quadratic programming in Example 2.
 \begin{figure}
  \centering
  \includegraphics[width=5in]{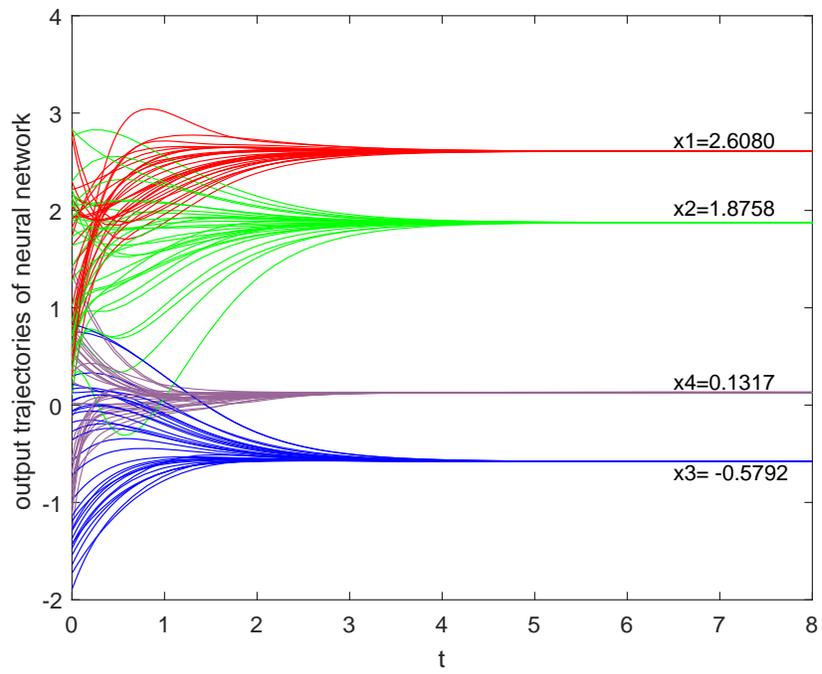}
  \caption{The state trajectory of the time-delay neural network corresponding to Example 2}\label{fig:example2}
\end{figure}

\end{example}

\section{Conclusion}
\par \setlength{\parindent}{2em}This paper proposes a class of neural network models with variable time delays to solve convex optimization problems. Compared with constant time delays, the discussion of variable time delay has better practical value. The equilibrium point of the neural network corresponds to the optimal solution of the convex optimization problem. Therefore, it is meaningful to use the neural network with $n+h$ neurons to solve the optimization problem in practice. For the proposed neural network, it is proved that the equilibrium point of the neural network exists and is unique, we discussed that it is globally exponentially stable under certain conditions. Some examples are given to illustrate the practicability of the network.

\section*{References}

\bibliography{finalbib}
\begin{CJK}{GBK}{kai}
\renewcommand\refname{references}

\end{CJK}

\end{document}